\newcommand{\zz}{{\Bbb Z}}
\newcommand{\nn}{\Bbb N}
\newcommand{\cc}{\Bbb C}
\newcommand{\rr}{\Bbb R}
\newcommand{\pp}{{\Bbb P}}
\newcommand{\aaa}{{\Bbb A}}
\newcommand{\hh}{{\Bbb H}}
\newcommand{\js}{{\frak{j}}}
\newcommand{\jjs}{{\mathbf{j}}}
\newcommand{\ddim}{\operatorname{dim}}
\newcommand{\ddetpm}{\operatorname{det}_{\pm}}
\newcommand{\Hom}{\operatorname{Hom}}
\newcommand{\op}[1]{\operatorname{#1}}
\newcommand{\kbar}{\overline{k}}
\newcommand{\ffi}{\varphi}
\newcommand{\la}{\langle}
\newcommand{\ra}{\rangle}
\newcommand{\lva}{\langle\!\langle}   
\newcommand{\rva}{\rangle\!\rangle}   
\newcommand{\row}{\rightarrow}
\newcommand{\low}{\leftarrow}
\newcommand{\lrow}{\longrightarrow}
\renewcommand{\leq}{\leqslant}
\newcommand{\nichego}[1]{}
\newcommand{\ov}[1]{\overline{#1}}
\newcommand{\wt}[1]{\widetilde{#1}}
\newcommand{\smk}{{\mathbf{Sm_k}}}
\newcommand{\cm}{{\cal M}}
\newcommand{\dmk}{\op{DM}(k)}
\newcommand{\dmkD}{\op{DM}(k;\zz/2)}
\newcommand{\hii}{{\cal X}}
\newcommand{\DQMgm}{\op{DQM}^{gm}}
\newcommand{\Qed}{\hfill$\square$\smallskip}
\newenvironment{proof}{\noindent{\it Proof}:}{\vskip 5mm}
\newenvironment{prof}[1]{\noindent{\it #1}:}{\vskip 5mm}
\newenvironment{remarks}{\noindent{\bf Remarks:}}{\vskip 5mm}
\newtheorem{prop}{Proposition}[section]{\bf}{\it}
\newtheorem{thm}[prop]{Theorem}{\bf}{\it}
\newtheorem{lem}[prop]{Lemma}{\bf}{\it}
{\bf}{\it}
{\bf}{\it}
{\bf}{\it}
{\bf}{\it}
{\bf}{\it}
{\bf}{}
\begin{document}

\title{Motivic equivalence of affine quadrics}
\author{Tom Bachmann\footnote{Fakult\"at Mathematik, Universit\"at Duisburg-Essen} {\tiny and} Alexander Vishik\footnote{School of Mathematical Sciences, University
of Nottingham}}
\date{}

\maketitle

\begin{abstract}
In this article we show that the motive of an affine quadric $\{q=1\}$ determines the respective quadratic form.
\end{abstract}

\tableofcontents

\section{Introduction}
\label{Intro}
The algebraic theory of quadratic forms over arbitrary fields was created by E.Witt in the 1930-ies. It was greatly extended in the 60-ies
by A.Pfister and then by J.Milnor, J.-Kr.Arason, M.Knebusch and others. Aside from the classical algebraic methods, various
cohomological tools and specialization arguments entered the game. For a while the subject was centered around Milnor's conjecture
and the forms which were attracting the main attention were Pfister forms, since the respective projective quadrics are norm-varieties
for pure symbols in Milnor's K-theory mod 2. One should note here the contribution by A.Merkurjev, A.Suslin and M.Rost.
Simultaneously, unramified cohomology was successfully applied by B.Kahn, M.Rost, R.Sujatha and O.Izhboldin.
In the works of M.Rost, motives entered the subject with the discovery of the Rost motive and the proof of such
foundational results as the Nilpotence Theorem - see \cite{R,R2}. The Pfister case was developed further by V.Voevodsky in the context of
triangulated category of motives in \cite{VoMil} and investigated in more details in \cite{OVV}. Finally, the case of an arbitrary quadric
was systematically addressed from the motivic point of view in the works of the second author, and of N.Karpenko and A.Merkurjev.
This approach is based on studying geometric and motivic properties of various projective homogeneous varieties associated to a quadric,
and it permitted to settle many open problems left from the algebraic theory. For an overview see \cite{lens} and \cite{EKM}
(and also \cite{EC}).
Note however that in these works only the motives of projective quadrics were studied, while the affine case was largely ignored.

The story made a new turn when F.Morel showed that the Grothendieck-Witt ring of quadratic forms over $k$ can be identified with
the $(0)[0]$-stable homotopy group of spheres in the algebro-geometric (motivic) homotopic world \cite[Theorem 6.2.1]{morel2004motivic-pi0}. This reignited the interest in quadratic
forms due to the prominent role these objects play in $\aaa^1$-homotopy theory and, in particular, due to
understanding that their properties should be related to that of the classical (topological) stable homotopy groups of spheres.

From a homotopic point of view, an affine quadric is more attractive than the projective one. Over an algebraically closed field, such an affine quadric has only two cells, like a sphere. Moreover over $\cc$ the complex points of an affine quadric have the homotopy type of a topological sphere, and the same holds for the real points over $\rr$. Over a general field, one may consider affine quadrics to be ``non-split'' spheres. It appears that their motives behave much better than those of projective quadrics.
While the motive of a projective quadric does not determine the quadric itself, the motive of an affine quadric determines the
respective form.
In this article we will show that the motives of two affine quadrics $\{q=1\}$ and $\{p=1\}$ are isomorphic if and only if $p\cong q$.
Thus, we obtain that the restriction of the functor
$\smk\row {\cal H}(k)\row S{\cal H}(k)\row \dmk$ to affine quadrics does not glue objects.

We provide two alternative proofs of this result. One uses the standard motivic techniques developed for projective quadrics (and projective
homogeneous varieties, in general). The second one uses the novel "generalised geometric fixed point functors'' introduced
by the first author in \cite{BQ}. We hope that this will provide a good illustration of the methods of \cite{BQ}.
In particular, the reader should see how these methods interact with the classical techniques. Both versions use the Theorem of Izhboldin on
motives of odd-dimensional projective quadrics \cite{Izh}.

\section{The Main Theorem}

Everywhere below $k$ will be a field of characteristic different from $2$.
For a quadratic form $q$ over $k$ we will denote by $A_q$ the affine quadric $\{q=1\}$.
For a smooth variety $X$ over $k$, we denote by $M(X)$ its image in the triangulated categories of motives $\dmk$ or
$\dmkD$ of V.Voevodsky (constructed in \cite{voevodsky-triang-motives}; we drop "minus" from the notations).

\begin{thm}
\label{Main}
Let $p$ and $q$ be quadratic forms over $k$, and $A_q$ and $A_p$ be the respective affine quadrics.
Then the following conditions are equivalent:
\begin{itemize}
\item[$(1)$] $p\cong q$;
\item[$(2)$] $M(A_q)\cong M(A_p)$ in $\dmk$;
\item[$(3)$] $M(A_q)\cong M(A_p)$ in $\dmkD$.
\end{itemize}
\end{thm}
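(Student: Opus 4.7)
The implications $(1)\Rightarrow(2)\Rightarrow(3)$ are immediate from the functoriality of $M(-)$ and of the change-of-coefficients functor $\dmk\to\dmkD$. The entire content lies in proving $(3)\Rightarrow(1)$, i.e.\ in recovering $q$ up to isometry from $M(A_q)\in\dmkD$.

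My first step would be to read off numerical invariants. The dimension of $q$ is recovered from $M(A_q)$ as $\ddim A_q+1$, detected by the top non-vanishing weight of motivic (co)homology, so at once $\ddim p=\ddim q=:n+1$. The main geometric input is then the Gysin triangle associated with the open embedding of $A_q$ into the projective quadric $\tilde Q$ of $\tilde q:=q\perp\la-1\ra$, whose closed complement is the projective quadric $Q$ of $q$ (sitting as the hyperplane section at infinity):
$$M(A_q)\to M(\tilde Q)\to M(Q)(1)[2]\to M(A_q)[1].$$
Thus the affine motive is (a shift of) the fibre of a Gysin map between the motives of two projective quadrics intrinsically associated with $q$. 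Note that $\tilde q$ is isotropic over $k(Q)$ and over every field over which $q$ represents $1$, which should give substantial control over the decomposition of $M(\tilde Q)$ after suitable base change.

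The plan is to use the rigidity of motivic decompositions of projective quadrics modulo $2$---Rost nilpotence together with the upper-motives machinery developed in the works of the second author, Karpenko and Merkurjev---to lift the given isomorphism $M(A_q)\cong M(A_p)$ to an isomorphism of the motives of the associated projective quadrics, by comparing the two Gysin triangles. Once such a lifting is in hand, observe that $Q$ and $\tilde Q$ differ in parity, so exactly one of them is odd-dimensional. Izhboldin's theorem applied to the odd-dimensional member then recovers the corresponding form up to a global scalar $\lambda\in k^*$. The scalar ambiguity is finally pinned down using the remaining data---the extra Tate summand of $M(A_q)$ which is present precisely when $q$ represents $1$, and discriminant-type considerations---together with Witt cancellation, upgrading a similitude to the isometry $p\cong q$.

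The principal obstacle, as ever with Gysin triangles, is the lifting step: an isomorphism of Gysin fibres does not automatically propagate to an isomorphism of the source and target of the Gysin map. Overcoming this is precisely where the detailed modular theory of motives of projective quadrics must enter the picture. The alternative proof advertised in the introduction, via the generalised geometric fixed-point functors of \cite{BQ}, should instead extract an algebraic invariant of $q$ directly from $M(A_q)$, bypassing the reduction to projective quadrics altogether.
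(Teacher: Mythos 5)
Your outline follows the paper's first proof quite closely: the Gysin triangle relating $M(A_q)$ to $M(Q')$ and $M(Q)(1)[2]$, the reduction to motivic equivalence of the associated projective quadrics, Izhboldin's theorem applied to the odd-dimensional member of the pair $(Q,Q')$, and a discriminant/Witt-cancellation argument to eliminate the residual similitude factor $\lambda$ are exactly the paper's ingredients, in that order. The one genuine gap is the step you flag yourself as the ``principal obstacle'' --- and that is precisely where the paper's actual content lies, so let me describe what is missing.

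The paper does \emph{not} propagate $M(A_q)\cong M(A_p)$ directly to isomorphisms $M(Q)\cong M(P)$ and $M(Q')\cong M(P')$; the morphisms of triangles one writes down are not themselves isomorphisms. After extending $f$ and $g$ to morphisms of Gysin triangles (possible because $\Hom_{\dmkD}(M(Q)(1)[2],M(P')[1])=0$, from the vanishing of motivic cohomology in bidegrees $b>2a$), the paper uses Rost nilpotence to replace the round-trip endomorphisms on the projective slots by idempotents. These projectors split each of $M(Q')$, $M(Q)(1)[2]$ into a summand matched with the corresponding summand of $M(P')$, $M(P)(1)[2]$, and a complementary ``leftover'' summand that is instead identified \emph{between} $Q'$ and $Q$ (and between $P'$ and $P$), because the affine component of the leftover dies. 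Translating these splittings into isomorphisms among \v{C}ech-simplicial-scheme motives $M(\hii_{Q_i})$, $M(\hii_{Q'_i})$, etc., the paper then introduces the ``affine splitting tower'' and ``affine shells'' of $(Q',Q)$ --- interleaving the Knebusch splitting fields of $q$ and $q'$ --- and, via a case analysis of the shell boundaries (using $i_1(q'_L)-i_1(q_L)=\pm1$ at each stage), deduces $i_W(q|_E)=i_W(p|_E)$ and $i_W(q'|_E)=i_W(p'|_E)$ for all $E/k$. Only then does the Karpenko--Vishik criterion of motivic equivalence yield $M(Q)\cong M(P)$ and $M(Q')\cong M(P')$, after which the Izhboldin and discriminant endgame runs as you describe. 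So the lifting is not ``affine isomorphism $\Rightarrow$ projective isomorphism'' in one stroke: it passes through the equality of Witt indices, and the affine-shell combinatorics is the device that closes the argument. Without supplying that device (or some substitute such as the $\Phi^E$-invariants of the alternative proof), your proposal is a correct strategic outline but not yet a proof of $(3)\Rightarrow(1)$.
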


\begin{proof}
It is obvious that $(1)\Rightarrow(2)\Rightarrow(3)$. To prove that $(3)\Rightarrow(1)$ we will need the following result.

For quadratic forms $q$ and $p$ over $k$, denote by $Q$ (respectively $P$) the projective quadric given by $q$ (respectively, $p$),
and by $Q'$ (respectively $P'$) the projective quadric given by $q\perp\la -1\ra$ (respectively, $p\perp\la -1\ra$).

\begin{prop} \label{prop:equiv-1}
\label{QQ'}
Suppose that $M(A_q)\cong M(A_p)$ in $\dmkD$. Then $M(Q)\cong M(P)$ and $M(Q')\cong M(P')$ (in the same category).
\end{prop}

We will prove this proposition by emplyoing the criterion of motivic equivalence of quadratic forms. To state it, recall that if $q$ is a (non-degenerate) quadratic form over a field $k$, then $q \cong n\hh \perp q_{an}$, where $q_{an}$ is anisotropic. Moreover, $n$ and (the isomorphism class of) $q_{an}$ are determined uniquely \cite{L}. The number $n$ is called the \emph{Witt index} of $q$ over $k$ and we denote it by $i_W(q)$. The criterion of motivic equivalence now states that for non-degenerate quadratic forms $p, q$ we have $M(Q) \cong M(P)$ if and only if $dim(p) = dim(q)$ and $i_W(p|_E) = i_W(q|_E)$ for all field extensions $E/k$ - \cite[Proposition 5.1]{IMQ} or \cite[Theorem 4.18]{lens}
(see also \cite{Kar2}). Thus Proposition \ref{prop:equiv-1} is equivalent to the following result:

\begin{prop} \label{prop:equiv-2}
Suppose that $M(A_q)\cong M(A_p)$ in $\dmkD$. Then $\dim(p) = \dim(q)$, $\dim(p') = \dim(q')$, and for all field extensions $E/k$ we have $i_W(p|_E) = i_W(q|_E)$ and $i_W(p'|_E) = i_W(q'|_E)$.
\end{prop}

We give here a direct proof using motivic methods developed for projective quadrics.
We still introduce some new objects: the {\it splitting tower} and {\it shells} for an affine quadric. These are our main tools.

\begin{proof}
Since $A_q$ is an open subvariety of $Q'$ with closed (regular) complement $Q$, we have a distinguished Gysin triangle
\[ M(Q')\row M(Q)(1)[2]\row M(A_q)[1]\row M(Q')[1] \] in $\dmk$, and similarly for $p$. Notice, that the group
$\Hom_{\dmkD}(M(Q)(1)[2], M(P')[1])$ is zero, since, for smooth $X$,
$\op{H}^{b,a}_{\cm}(X)=0$, for $b>2a$ (see e.g. \cite[Corollary 4.2.6]{voevodsky-triang-motives}). Similarly, there are no hom's from $M(P)(1)[2]$ to $M(Q')[1]$.
This shows that our (mutually inverse) isomorphisms
\[ \xymatrix@-0.1pc{M(A_q) \ar @/^0.5pc/ @{->}[rr]^(0.5){f} & & M(A_p) \ar @/^0.5pc/ @{->}[ll]^(0.5){g}} \]
can be extended to morphisms ${\mathbf{\Phi}}:{\mathbf{Q}}\row {\mathbf{P}}$ and
${\mathbf{\Psi}}:{\mathbf{P}}\row {\mathbf{Q}}$ of distinguished
triangles:
$$
\xymatrix @-0.7pc{
M(Q') \ar@{->}[r] \ar@{->}[d]^(0.5){\ffi'} & M(Q)(1)[2] \ar@{->}[r] \ar@{->}[d]^{\ffi} & M(A_q)[1] \ar@{->}[r] \ar@{->}[d]^(0.5){f[1]} &
M(Q')[1] \ar@{->}[d]^{\ffi'[1]}\\
M(P') \ar@{->}[r] \ar@{->}[d]^(0.5){\psi'} & M(P)(1)[2] \ar@{->}[r] \ar@{->}[d]^{\psi} & M(A_p)[1] \ar@{->}[r] \ar@{->}[d]^(0.5){g[1]} &
M(P')[1] \ar@{->}[d]^{\psi'[1]}\\
M(Q') \ar@{->}[r] & M(Q)(1)[2] \ar@{->}[r] & M(A_q)[1] \ar@{->}[r] & M(Q')[1].
}
$$
Let ${\mathbf{A}}:={\mathbf{\Psi}}\circ{\mathbf{\Phi}}$, ${\mathbf{B}}:={\mathbf{\Phi}}\circ{\mathbf{\Psi}}$. Respectively,
$\alpha:=\psi\circ\ffi$, $\beta:=\ffi\circ\psi$, $\alpha':=\psi'\circ\ffi'$, and $\beta':=\ffi'\circ\psi'$.
When restricted to algebraic closure, $\alpha,\beta,\alpha'$ and $\beta'$ are endomorphisms of split motives of finite rank with
$\zz/2$-coefficients. The respective endomorphism rings are finite, and so, some power of every element is an idempotent. We clearly
may choose this power to be common for all four elements in question (just multiply the individual powers). Then it follows from the
Rost Nilpotence Theorem for quadrics - \cite{R2} (see also \cite[Lemma 3.10]{IMQ}) that there is $N\in\nn$ such that
$\alpha^N,\beta^N,(\alpha')^N,(\beta')^N$ are idempotents - see \cite[Corollary 3.2]{lens}. Changing ${\mathbf{\Psi}}$ to
${\mathbf{A}}^{\circ 2N-1}\circ{\mathbf{\Psi}}$
(and keeping ${\mathbf{\Phi}}$), we may assume that $N=1$, i.e. that $\alpha$, $\beta$, $\alpha'$, $\beta'$ are
idempotents. Finally, changing ${\mathbf{\Psi}}$ to ${\mathbf{A}}\circ{\mathbf{\Psi}}\circ{\mathbf{B}}$ and
${\mathbf{\Phi}}$ to ${\mathbf{B}}\circ{\mathbf{\Phi}}\circ{\mathbf{A}}$, we may assume that, in addition,
${\mathbf{\Psi}}\circ{\mathbf{B}}={\mathbf{\Psi}}={\mathbf{A}}\circ{\mathbf{\Psi}}$ and
${\mathbf{\Phi}}\circ{\mathbf{A}}={\mathbf{\Phi}}={\mathbf{B}}\circ{\mathbf{\Phi}}$.

Denoting as $({\mathbf{Q}},{\mathbf{A}})$ the image of the projector ${\mathbf{A}}$ and as
$({\mathbf{Q}},\ov{{\mathbf{A}}})$ its kernel (and similarly for $p$), we see that ${\mathbf{\Phi}}$ is zero on $({\mathbf{Q}},\ov{{\mathbf{A}}})$,
while ${\mathbf{\Psi}}$ is zero on $({\mathbf{P}},\ov{{\mathbf{B}}})$. At the same time, we get isomorphisms
\[
  ({\mathbf{Q}},{\mathbf{A}})\stackrel{{\mathbf{\Phi}}}{\lrow}({\mathbf{P}},{\mathbf{B}})
  \quad\text{and}\quad
   ({\mathbf{P}},{\mathbf{B}})\stackrel{{\mathbf{\Psi}}}{\lrow}({\mathbf{Q}},{\mathbf{A}}).
\]
This gives
(mutually inverse) isomorphisms:
\[
  \xymatrix@-0.1pc{(M(Q'),\alpha') \ar @/^0.5pc/ @{->}[rr]^(0.5){\ffi'} & & (M(P'),\beta') \ar @/^0.5pc/ @{->}[ll]^(0.5){\psi'}}
  \quad\text{and}\quad
 \xymatrix@-0.1pc{(M(Q)(1)[2],\alpha) \ar @/^0.5pc/ @{->}[rr]^(0.5){\ffi} & & (M(P)(1)[2],\beta) \ar @/^0.5pc/ @{->}[ll]^(0.5){\psi}}.
\]

On the other hand, the affine components of $({\mathbf{Q}},\ov{{\mathbf{A}}})$ and $({\mathbf{P}},\ov{{\mathbf{B}}})$ are zero
(since $g$ and $f$ are mutually inverse).
This provides isomorphisms $(M(Q'),\ov{\alpha'})\stackrel{\cong}{\lrow} (M(Q)(1)[2],\ov{\alpha})$ and
$(M(P'),\ov{\beta'})\stackrel{\cong}{\lrow} (M(P)(1)[2],\ov{\beta})$.

Recall that over $\kbar$ the motive of a projective quadric $R$ of dimension $m$ (corresponding to a form $r$) splits into a direct sum
$\oplus_{0\leq i\leq d}(\zz/2(i)[2i]\oplus\zz/2(m-i)[2m-2i])$
of Tate motives (here $d=[m/2]$). Let us denote the respective Tate motives as $T_i$ and $T^i$. For a field extension $E/k$
the Tate motives $T_i$ and $T^i$ split from $M(R)$ if and only if $i_W(r_E)>i$. We say that $i$ and $j$ belong to the same {\it shell}
if the conditions $i_W(r_E)>i$ and $i_W(r_E)>j$ are equivalent (for all $E/k$). The latter condition is equivalent to the fact that
there are rational
maps \[\xymatrix@-0.1pc{R_i \ar @/^0.5pc/ @{-->}[rr]^(0.5){} & & R_j \ar @/^0.5pc/ @{-->}[ll]^(0.5){}},\] where $R_l$ is the Grassmannian of $l$-dimensional projective subspaces on $R$. By the Theorem of Springer (see \cite[VII,Theorem 2.3]{L}) this is equivalent to the fact that
the varieties $R_i|_{k(R_j)}$ and $R_j|_{k(R_i)}$ have zero-cycles of degree one. And due to \cite[Theorem 2.3.4]{IMQ} we can reformulate
it as: the motives $M(\hii_{R_i})$ and $M(\hii_{R_j})$ of the Chech simplicial schemes are isomorphic (such an isomorphism is then unique).
Recall, that if $V$ and $W$ are smooth varieties such that $W_{k(V)}$ has a zero cycle of degree $1$,
then there is a natural map $M(\hii_V)\row M(\hii_W)$ (a unique such non-zero
map in $\dmkD$, it becomes an isomorphism of Tate-motives over $\kbar$) - see, for example,
\cite[Theorem 2.3.6]{IMQ}. Since we always have rational maps
$\xymatrix@-0.1pc{R_i & R_j \ar @{-->}[l]}$, for $i<j$, we get a chain of morphisms:
$$
M(\hii_{R_0})\low M(\hii_{R_1})\low M(\hii_{R_2})\low M(\hii_{R_3})\low\ldots.
$$
And shells are just maximal connected pieces consisting of isomorphisms in this chain of morphisms.

It follows from the work of M.Knebusch \cite{Kn2} that the notion of {\it shells} is equivalent to that of a
{\it splitting pattern} $\jjs(r)$ of the form $r$ which is defined as an increasing sequence
$\{\js_0,\js_1,\ldots,\js_h\}$ of all possible Witt indices of $r_E$ over
all possible field extensions $E/k$. Then $i$ and $l$ belong to the same shell if and only if $\js_t\leq i,l<\js_{t+1}$, for some $t$.

\begin{lem}
\label{chech-eq}
For any $0\leq i\leq [\ddim(Q')/2]$ we have
$$
\xymatrix@-0.1pc{
\ar@{}[rd]|-{\text{either:}}& & M(\hii_{Q'_i}) \ar@{=}[d] \ar@{}[rd]|-{\text{and}} & M(\hii_{Q_i})\ar@{=}[d] \ar@{}[rrd]|-{or:} &  &
 M(\hii_{Q'_i}) \ar@{=}[r] \ar@{}[rd]|-{\text{and}} & M(\hii_{Q_i}) \ar@{}[rd]|-{.}&\\
& & M(\hii_{P'_i}) & M(\hii_{P_i}) & & M(\hii_{P'_i}) \ar@{=}[r] & M(\hii_{P_i}) &
}
$$
\end{lem}

\begin{proof}
Denote $(M(Q'),\alpha')\cong (M(P'),\beta')$ as $N'$. Then, for any $i$ such that $T^i$ belongs to the decomposition of $N'_{\kbar}$, the
conditions that $T^i$ splits from $M(Q')_E$ and $M(P')_E$ are equivalent, for every $E/k$. This means that the conditions
$i_W(q'_E)>i$ and $i_W(p'_E)>i$ are equivalent as well, and so $M(\hii_{Q'_i})\cong M(\hii_{P'_i})$.  Analogously, denoting
$(M(Q)(1)[2],\alpha)\cong(M(P)(1)[2],\beta)$ as $N$, we see that, for any $i$ such that $T^i$ belongs to the decomposition of
$N_{\kbar}$, the conditions $i_W(q_E)>i$ and $i_W(p_E)>i$ are equivalent, for all $E/k$, and so, $M(\hii_{Q_i})\cong M(\hii_{P_i})$.

If, on the other hand,  $T^i$ is not contained in $N'_{\kbar}$, then it is contained in $(M(Q'),\ov{\alpha'})_{\kbar}$.
But $(M(Q'),\ov{\alpha'})$
is identified with $(M(Q)(1)[2],\ov{\alpha})$. This implies that the conditions $i_W(q'_E)>i$ and $i_W(q_E)>i$ are equivalent,
for every $E/k$. In other words, $M(\hii_{Q'_i})\cong M(\hii_{Q_i})$.
Analogously, we also have: $M(\hii_{P'_i})\cong M(\hii_{P_i})$.
\Qed
\end{proof}

Since $q$ is a subform of codimension one in $q'$, for any $i$, we have
implications:
$$
i_W(q_E)>i+1\,\,\Rightarrow\,\,i_W(q'_E)>i+1\,\,\Rightarrow\,\,i_W(q_E)>i\,\,\Rightarrow\,\,i_W(q'_E)>i.
$$
In particular, in analogy with the classical {\it splitting tower of Knebusch} - see \cite{Kn2},
it makes sense to speak about the {\it common splitting tower} for $Q$ and $Q'$ where the splitting fields
for $Q$ and $Q'$ are intertwined. More precisely, we may introduce the partial order on finitely generated extensions
over $k$, by saying that $K/k<L/k$ iff $K/k$ can be embedded into some purely transcendental extension of $L/k$. We say that
$K/k\sim L/k$ if $K/k>L/k$ and $L/k>K/k$.
The above implications give us the chain of rational maps
\[\xymatrix@-0.1pc{Q'_0 & Q_0\ar @{-->}[l] & Q'_1\ar @{-->}[l] & Q_1\ar @{-->}[l] & \ldots \ar @{-->}[l]   }.\]
And quadratic Grassmannians are rational as soon as they posses a rational point.
This gives the chain of fields (where we drop the ground field from the notations):
$$
k(Q'_0)<k(Q_0)<k(Q'_1)<k(Q_1)<k(Q'_2)<k(Q_2)<\ldots.
$$
By substituting the field extensions by equivalent ones, we can change "inequalities" to embeddings of fields,
thus creating a genuine tower.
We will call such tower the {\it splitting tower of $A_q$}, or just an {\it affine splitting tower}.
In a different language, we have an ordered chain of motives of Chech simplicial schemes:
$$
M(\hii_{Q'_0})\low M(\hii_{Q_0})\low M(\hii_{Q'_1})\low M(\hii_{Q_1})\low\ldots.
$$
Let us call a maximal connected piece consisting of isomorphisms in this chain - an "affine shell" of the pair $(Q',Q)$
(or, in other words, an "affine shell of $A_q$").
And since the distance between the anisotropic parts of $q'$ and $q$ is always one,
for any extension $L/k$, it follows from \cite[Corollary 4.9]{lens} (or \cite[Corollary 3]{DSMQ}) that
$i_1(q'_L)-i_1(q_L)=\ddim((q'_L)_{an})-\ddim((q_L)_{an})=\pm 1$ (here $i_1(r)=\js_1(r)-\js_0(r)$ is the {\it $1$-st higher Witt index}).
That means that every positive "affine shell" of $(Q',Q)$ ends with the same quadric with which it starts.

We need to show that, over any field $E/k$, the Witt indices of $q'_E$ and $p'_E$ (respectively, $q_E$ and $p_E$) coincide.
In other words, that the boundary between the 0-th and the 1-st "affine shells" for the pair $(Q',Q)$ is the same as that
for the pair $(P',P)$. Assume our field to be $E$.
There are 2 cases: either the $0$-th affine shell ends with $Q'$ and the $1$-st one starts with $Q$ (we call this a $Q'-Q$-boundary),
or the other way around (the $Q-Q'$-boundary). In the $Q'-Q$-case, for the first element $i=\js_0(q)$ of the 1-st affine shell, we have $M(\hii_{Q'_i})\cong T\not\cong M(\hii_{Q_i})$.
Hence, $T^i$ belongs to $N'_{\kbar}$ and $N_{\kbar}$, and so $M(\hii_{Q'_i})=M(\hii_{P'_i})$ and $M(\hii_{Q_i})=M(\hii_{P_i})$.
So, $M(\hii_{P'_i})\cong T\not\cong M(\hii_{P_i})$, and thus, the boundary between the 0-th and 1-st shell for the pair $(P',P)$ is
in the same place.
Consider now the case of a $Q-Q'$-boundary. Then the 1-st affine shell of $(Q',Q)$ starts and ends with $Q'$. Then for the last element
$i=\js_1(q')-1$ of the 1-st shell, $M(\hii_{Q'_i})\not\cong M(\hii_{Q_i})$. Hence, $T^i$ belongs to $N'_{\kbar}$ and $N_{\kbar}$, and so,
$M(\hii_{Q'_i})\cong M(\hii_{P'_i})$ and $M(\hii_{Q_i})\cong M(\hii_{P_i})$. In particular, the boundary between the 1-st and 2-nd shell
of $(Q',Q)$ is also a boundary between some affine shells for $(P',P)$. Passing to the appropriate splitting field of $(P',P)$ and
using the fact that the fields $k(Q_i)$ and $k(P_i)$ are equivalent, we may assume that the respective affine shell of $(P',P)$
is the 1-st one. Since the 1-st shells for $(Q',Q)$ and $(P',P)$ end in the same place and $M(\hii_{Q'_i})\cong M(\hii_{P'_i})$,
by \cite[Corollary 4.9]{lens}, they
should start in the same place as well. Consequently, our original $Q-Q'$-boundary is also a boundary (between some shells) for $(P',P)$.
Repeating this argument with the pair $(P',P)$, we obtain that the boundaries between the 0-th and 1-st shells for $(Q',Q)$ and
$(P',P)$ coincide. In other words, $i_W(q'_E)=i_W(p'_E)$ and $i_W(q_E)=i_W(p_E)$.

Proposition is proven.
\Qed
\end{proof}

Return to the proof of the Theorem. From Proposition \ref{QQ'} we know that $M(Q')\cong M(P')$, while $M(Q)\cong M(P)$.
Let us show that this implies that $q\cong p$.
Notice that either $Q$, or $Q'$ is odd-dimensional. By changing $q$ to $q'$ and $q'$ to $q\perp\hh$ (and similar for $p,p'$),
if necessary, we can assume that $\ddim(q)$ is odd (and $q'=q\perp\la \pm 1\ra$, $p'=p\perp\la \pm 1\ra$). Indeed by the Witt cancellation theorem, if $p \perp \hh \cong q \perp \hh$ then $p \cong q$.
Then, by the Theorem of Izhboldin - \cite[Theorem 2.5]{Izh}, there exists $\lambda\in k^*$ such that $p\cong \lambda\cdot q$.
In addition to this we have an isomorphism $M(Q')\cong M(P')$. In particular, the quadratic forms $q'=q\perp\la\pm 1\ra$ and
$\lambda\cdot p'=q\perp\la\pm\lambda\ra$ are split simultaneously. Let $k_h$ be the last field in the splitting tower of Knebusch for $q'$.
Then over ${k_h}$ the form $q'$ is split, and hence, so is $\lva\lambda\rva=\pm (q'-\lambda p')$.
Since the algebraic closure of $k$ in $k_h$ is
$k(\sqrt{\ddetpm(q')})$, it follows that either $\lambda=1$ or $\lambda=\ddetpm(q')\in k^*/(k^*)^2$.
And swapping $p,p'$ with $q,q'$, we get that either $\lambda=1$ or $\lambda=\ddetpm(p')\in k^*/(k^*)^2$.
Since $\ddetpm(q')\cdot\ddetpm(p')=\lambda$, we obtain that $\lambda=1$, and so, $q\cong p$.
This proves the implication $(3)\Rightarrow(1)$ and the Theorem.
\Qed
\end{proof}

\section{Alternative Proof of Proposition \ref{prop:equiv-2}}

We recall the ``generalised geometric fixed point functors'' introduced in \cite{BQ}. Let $\DQMgm(k, \zz/2)$ denote the thick
tensor triangulated subcategory of $\dmkD$ generated by $M(X)(i)$ for $X$ a smooth projective quadric over $k$ and $i \in \zz$. Let us note right away that $M(A_q) \in \DQMgm(k, \zz/2)$; this is an immediate consequence of the Gysin triangle \cite[Lemma 28]{BQ}.

We write $Tate(\zz/2)$ for the subcategory of $\dmkD$ consisting of pure Tate motives, i.e. sums of objects of the form $\zz/2(i)[2i]$ with $i \in \zz$. This category is equivalent to the category of finite-dimensional graded $\zz/2$-vector spaces. We denote the homotopy category of bounded chain complexes in $Tate(\zz/2)$ by $K^b(Tate(\zz/2))$; this is equivalent to the category of finite-dimensional bi-graded $\zz/2$-vector spaces.

The following is one of the main results of \cite{BQ}:

\begin{thm} \label{thm:Phi}
There exists an essentially unique tensor triangulated functor
$$
\Phi^k: \DQMgm(k, \zz/2) \to K^b(Tate(\zz/2))
$$
such that:
\begin{enumerate}
\item If $X$ is an anisotropic projective quadric (i.e. $X$ does not have a rational point), then $\Phi^k(M(X)) = 0$.
\item We have $\Phi^k(\zz/2(i)) = \zz/2(i)$.
\end{enumerate}
\end{thm}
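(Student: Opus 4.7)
The plan is to realise $\Phi^k$ as a Verdier quotient. Let $\cal N \subset \DQMgm(k, \zz/2)$ be the thick tensor ideal generated by $M(X)$ for $X$ an anisotropic smooth projective quadric, and let $\pi: \DQMgm(k, \zz/2) \to \DQMgm(k, \zz/2)/\cal N$ be the localisation. By the universal property of Verdier quotients, any tensor triangulated functor satisfying condition (1) factors uniquely through $\pi$, so essential uniqueness of $\Phi^k$ reduces to producing a tensor triangulated equivalence $\DQMgm(k, \zz/2)/\cal N \simeq K^b(Tate(\zz/2))$ which sends each $\pi(\zz/2(i))$ to $\zz/2(i)$.

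To establish this equivalence, I would consider the canonical tensor triangulated functor $\iota: K^b(Tate(\zz/2)) \to \DQMgm(k, \zz/2)/\cal N$ determined on generators by $\zz/2(i)[2i] \mapsto \pi(\zz/2(i)[2i])$, and prove it is essentially surjective and fully faithful. Essential surjectivity follows from the classical splitting decomposition of isotropic quadric motives: if $X$ is an isotropic smooth projective quadric of dimension $d$, then $M(X) \cong \zz/2 \oplus M(X')(1)[2] \oplus \zz/2(d)[2d]$ for a smooth quadric $X'$ of dimension $d-2$. Iterating inside the quotient, the motive of any quadric, and hence of any generator of $\DQMgm(k, \zz/2)$, either reduces after finitely many steps to a sum of Tate motives or picks up an anisotropic summand that dies in the quotient.

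The main obstacle is full faithfulness. One must verify
\[
  \Hom_{\DQMgm(k, \zz/2)/\cal N}(\zz/2, \zz/2(b)[n])
  = \begin{cases} \zz/2, & (n,b) = (0,0), \\ 0, & \text{otherwise,} \end{cases}
\]
whereas the corresponding group in $\DQMgm(k, \zz/2)$ itself is the motivic cohomology $H^{n,b}(k; \zz/2)$, which is generally large (it equals $K^M_b(k)/2$ when $n = b$, and étale cohomology in mixed bidegrees). The crucial input is that every positive-weight class in mod-$2$ motivic cohomology must vanish after localisation at $\cal N$. For a pure symbol $\{a_1,\dots,a_n\} \in K^M_n(k)/2$ this is Voevodsky's resolution of the Milnor conjecture: the symbol is killed by pullback to the corresponding anisotropic Pfister quadric, which lies in $\cal N$. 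The general case is reduced to symbols via the Bloch--Kato (norm residue) isomorphism, while the Rost nilpotence theorem ensures that idempotents in the quotient lift, so that Hom-groups behave predictably under $\pi$ and both injectivity and surjectivity of $\iota$ on Hom-groups can be controlled.

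Once $\iota$ is an equivalence of tensor triangulated categories, I define $\Phi^k$ as the composite of $\pi$ with a chosen inverse to $\iota$. Properties (1) and (2) are then immediate from the construction, and essential uniqueness follows from the universal property of $\pi$ combined with the observation that a tensor triangulated functor out of $K^b(Tate(\zz/2))$ is determined up to natural isomorphism by its value on the unit $\zz/2$. The technical heart of this program, and the step I expect to be hardest, is controlling motivic cohomology modulo the ideal of anisotropic norm quadrics; this is where essentially all of the arithmetic input (Bloch--Kato, Rost nilpotence) enters, and it is the sole reason the target has the simple form $K^b(Tate(\zz/2))$ rather than something larger.
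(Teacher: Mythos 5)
The paper itself does not prove Theorem~\ref{thm:Phi}; it merely cites it from \cite{BQ}, so your attempt has to be judged on its own merits. As written it contains a gap that the proposed route cannot close.

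The central claim --- that the Verdier quotient $\DQMgm(k,\zz/2)/\cal N$ is equivalent to $K^b(Tate(\zz/2))$ --- is false. Since $\DQMgm(k,\zz/2)$ is a \emph{full} triangulated subcategory of $\dmkD$, one has $\Hom_{\DQMgm}(\zz/2,\zz/2(b)[n])=H^{n,b}_{\cal M}(k;\zz/2)$, and this contains the Bott class $\tau\in H^{0,1}_{\cal M}(k;\zz/2)\cong\mu_2(k)\cong\zz/2$, a nonzero map $\zz/2\to\zz/2(1)$ for \emph{every} field $k$ of characteristic $\ne 2$. For $\tau$ to die in the Verdier quotient it would have to factor through an object of $\cal N$, and this does not happen. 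A clean counterexample: take $k$ to be quadratically closed but not algebraically closed (e.g.\ the quadratic closure of $\qq$). Then every nonempty projective quadric over $k$ is isotropic, so $\cal N=0$ and the quotient is $\DQMgm(k,\zz/2)$ itself; yet $\tau$ and its powers $\tau^a\in\Hom(\zz/2,\zz/2(a))$ are all nonzero there, while $\Hom_{K^b(Tate(\zz/2))}(\zz/2,\zz/2(a)[0])=0$ for $a>0$. More generally, for any $k$, Springer's theorem implies that zero-cycles on anisotropic quadrics have even degree, which forces composites $\zz/2\to M(X)(a)[b]\to\zz/2(1)$ through anisotropic $X$ to vanish mod $2$; so the off-diagonal \'etale cohomology of $k$ survives in $\DQMgm(k,\zz/2)/\cal N$, which is therefore always strictly larger than $K^b(Tate(\zz/2))$. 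Relatedly, your appeal to Voevodsky's theorem is not the right mechanism: the vanishing of a pure symbol after pullback to the norm quadric $X_\alpha$ says that a composite $M(X_\alpha)\to\zz/2\to\zz/2(n)[n]$ is zero, which is not the same as the symbol factoring through $\cal N$ (the cone of the augmentation is the reduced motive $\tilde M(X_\alpha)[1]$, which is \emph{not} in $\cal N$, else $\zz/2$ would be as well).

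The consequence is that $\Phi^k$ is not a localisation, so your strategy cannot produce it, and your reduction of uniqueness is only partial: the factorisation of any candidate $\Phi$ through $\pi:\DQMgm(k,\zz/2)\to\DQMgm(k,\zz/2)/\cal N$ is correct, but because the quotient has extra morphisms between Tate objects one still must argue separately that the induced functor out of the quotient is unique, which no longer follows just from fixing values on objects. For existence one must construct $\Phi^k$ by hand. In \cite{BQ} this goes through the fine structure theory of Chow motives of quadrics (Krull--Schmidt decomposition, Rost nilpotence) rather than through a Hom-group computation in a quotient category: the functor is built by projecting quadric Chow motives onto their split Tate summands and then promoting this to the triangulated level, and it is Rost nilpotence, not Bloch--Kato, that supplies the needed control over idempotents and decompositions. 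Your instinct that Rost nilpotence is the essential arithmetic input is right; the localisation framework around it is not.
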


\begin{remarks}
\begin{enumerate}
\item If $E/k$ is a field extension, we obtain by composition a functor $\DQMgm(k, \zz/2) \to \DQMgm(E, \zz/2) \xrightarrow{\Phi^E} K^b(Tate(\zz/2))$. We abusively also denote it by
$$
\Phi^E: \DQMgm(k, \zz/2) \to K^b(Tate(\zz/2)).
$$

\item In order to use this result effectively, the following lemma of Rost is useful: if $q = \hh \perp \wt{q}$, then
\cite[Proposition 1]{R} (or \cite[Lemma 28]{BQ})
\begin{equation} \label{eq:rost}
  M(Q) \cong \zz/2 \oplus M(\wt{Q})(1)[2] \oplus \zz/2(n)[2n], \text{ where } n=\dim(Q). \tag{$*$}
\end{equation}
\end{enumerate}
\end{remarks}
As a warmup exercise, let us re-prove the easy half of the criterion for motivic equivalence:

\begin{prop}
Let $P, Q$ be smooth projective quadrics such that $M(P) \cong M(Q) \in \DQMgm(k, \zz/2)$. Then $\dim(P) = \dim(Q)$ and $i_W(P|_E) = i_W(Q|_E)$ for all field extensions $E/k$.
\end{prop}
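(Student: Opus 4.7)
The plan is to apply the functor $\Phi^E$ of Theorem \ref{thm:Phi} to the isomorphism $M(P)\cong M(Q)$ for varying field extensions $E/k$, and read off the numerical invariants $\dim$ and $i_W$ from the resulting bigraded $\zz/2$-vector spaces in $K^b(Tate(\zz/2))$.

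First I would compute $\Phi^E(M(Q))$ explicitly. Writing $q|_E \cong i_W(q|_E)\cdot\hh \perp (q|_E)_{an}$ and iterating the Rost decomposition $(*)$ exactly $i_W(q|_E)$ times yields
$$
M(Q|_E) \cong \bigoplus_{j=0}^{i_W(q|_E)-1}\bigl(\zz/2(j)[2j] \oplus \zz/2(n-j)[2n-2j]\bigr) \;\oplus\; M(Q^E_{an})(i_W(q|_E))[2i_W(q|_E)],
$$
where $n=\dim(Q)$ and $Q^E_{an}$ denotes the projective quadric of the anisotropic kernel of $q|_E$ (taken to be empty if this kernel has dimension $\leq 1$). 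Since $\Phi^E$ annihilates non-empty anisotropic projective quadrics (by property $(1)$ of Theorem \ref{thm:Phi}) and the empty scheme (trivially), the last summand dies under $\Phi^E$, giving
$$
\Phi^E(M(Q)) \cong \bigoplus_{j=0}^{i_W(q|_E)-1}\bigl(\zz/2(j)[2j] \oplus \zz/2(n-j)[2n-2j]\bigr),
$$
and an analogous formula for $P$ with $m:=\dim(P)$ in place of $n$.

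The hypothesis $M(P)\cong M(Q)$ now forces $\Phi^E(M(P))\cong \Phi^E(M(Q))$ as bigraded $\zz/2$-vector spaces for every $E/k$. Taking $E$ to be a common splitting field for $p$ and $q$ (e.g.\ $\kbar$), the top Tate twist appearing on the right-hand side is $n$, so comparison forces $m=n$; hence $\dim P=\dim Q$. For an arbitrary $E/k$, equality of the total dimensions of the two bigraded vector spaces then yields $2\,i_W(p|_E)=2\,i_W(q|_E)$, whence $i_W(p|_E)=i_W(q|_E)$.

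The one point requiring genuine care is the Rost iteration in the boundary cases when the anisotropic kernel of $q|_E$ has dimension $0$ or $1$ (so its ``projective quadric'' is either empty or non-smooth of dimension zero): one has to verify that the iteration can still be carried out the full $i_W(q|_E)$ times and that the residual summand contributes $0$ under $\Phi^E$, so that the clean formula displayed above holds uniformly in $E$. Apart from this bookkeeping, the proof is a direct computation using $\Phi^E$ and the Rost decomposition.
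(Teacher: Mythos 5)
Your proof is correct and takes essentially the same approach as the paper: apply $\Phi^E$ to $M(P)\cong M(Q)$, iterate the Rost decomposition \eqref{eq:rost} to strip off the $i_W$ hyperbolic planes, kill the remaining anisotropic summand by property (1) of Theorem \ref{thm:Phi}, then read off $\dim$ from the top weight over $\kbar$ and $i_W(q|_E)$ from the total rank. (You have the summation bounds $j=0,\dots,i_W-1$ right, which fixes a small off-by-one in the paper's displayed formula.)
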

\begin{proof}
If $P$ has a rational point over a field extension $E$, then \eqref{eq:rost} together with Theorem \ref{thm:Phi} implies that $\dim(P)$ is the maximal weight occurring in $\Phi^E(M(P))$ (in other words the bigraded $\zz/2$-vector space $\Phi^E(M(P))$ is isomorphic to a sum of one-dimensional spaces of the form $\zz/2(i)[j]$, and $\dim(P)$ is the maximal $i$ that occurs). This always happens over an algebraic closure $E=\bar{k}$, so that $M(P) \cong M(Q)$ implies $\dim(P) = \dim(Q)$.

For the Witt index we argue similarly. Let $p = r\hh \perp \wt{p}$ with $\wt{p}$ anisotropic, so that $i_W(P) = r$. Repeated application of \eqref{eq:rost}  shows that\footnote{Note that if $p = aX^2$ is a one-dimensional quadratic form (with $a \ne 0$) then $P \subset \pp^0 = \pp(k^1) = *$ is the closed projective subvariety given by $\{aX^2 = 0\}$, which is \emph{empty}. In particular $M(P)=0$.}
\begin{align*}
  M(P) \cong \oplus_{i=0}^r(\zz/2(i)[2i]\oplus\zz/2(\ddim(P)-i)[2\ddim(P)-2i])\oplus M(\wt{P})(r)[2r]
\end{align*}
Consequently $\Phi^k(M(P))$ has rank $2r$, by Theorem \ref{thm:Phi}. Thus $M(P)$ determines $i_W(P)$, as was to be shown.
By base change to a field extension $E/k$, $M(P)$ also determines $i_W(P|_E)$. This concludes the proof.
\Qed
\end{proof}

In the above proof, we have seen that the bi-graded vector space $\Phi^k(M(P))$ can have large dimension, and we had to work to extract the numbers that we wanted. If instead of a projective quadric we consider an affine quadric, then the vector spaces $\Phi^k(M(A_p))$ become much smaller - they have either dimension 2 or 0. In fact, if instead we consider the reduced motive, then $\Phi^k(\tilde{M}(A_p))$ always has dimension 1: this is expected if $\tilde{M}(A_p)$ is supposed to ``behave like a sphere'', i.e. be invertible, and the main result of \cite{BQ} is that not only is the dimension always 1, but that also this condition is sufficient (and necessary) for invertibility. Consequently if $A_q$ is an affine quadric, then for every field extension $E/k$ we have $\Phi^E(\tilde{M}(A_q))=\zz/2[i_E](j_E)$, and we get two numerical invariants of $q$ (namely $i_E$ and $j_E$). The strategy of the alternative proof of our main result is to show that these invariants are closely related to the Witt indices of $q|_E$ and $q'|_E$.

\begin{prof}{Alternative proof of Proposition \ref{prop:equiv-2}}
Recall that we are given quadratic forms $p$ and $q$ such that $M(A_p) \cong M(A_q)$, and we need to show that $\dim(p) = \dim(q)$, $\dim(p') = \dim(q')$, and for all field extensions $E/k$ we have $i_W(p|_E) = i_W(q|_E)$ and $i_W(p'|_E) = i_W(q'|_E)$.

In the proof we will want to write $p = n\hh \perp p_{an}$, for an anisotropic form $p_{an}$, which we will then treat on a similar footing to $p$. This requires us to allow the degenerate cases $p=0$, where $A_p = P = P' = \emptyset$ and $p=aX^2$, where $A_p = P' = Spec(k[x]/(x^2-a))$ and $P=\emptyset$. We consider both of these forms as anisotropic. Note that it is still the case that $\Phi^k(M(P)) = 0$ in these degenerate cases, and this is the main property we require of anisotropic forms.

Let us first note that $M(A_q)$ determines the reduced motive $\tilde{M}(A_q)$. Indeed we have $\Hom_{\dmkD}(M(A_q), \zz/2) = CH^0(A_q)/2$ \cite[Corollary 4.2.5]{voevodsky-triang-motives}, and the fibre of a\footnote{There is more than one non-zero morphism if and only if $q \cong X^2$, in which case $M(A_q) = M(\{X^2 = 1\}) = M(* \coprod *)$.} non-zero morphism $M(A_q) \to \zz/2$ is $\tilde{M}(A_q)$. If all such morphisms are zero, then $A_q = \emptyset$ and $\tilde{M}(A_q) = \zz/2[-1]$ is also determined.

The following observation will be used many times: if $q \cong \wt{q} \perp \hh$, then it follows from \eqref{eq:rost} - see
\cite[Lemma 34]{BQ} that
\begin{equation} \label{eq:twist}
  \tilde{M}(A_q) \cong \tilde{M}(A_{\wt{q}})(1)[2].
\end{equation}

Let us first show that $\tilde{M}(A_q)$ determines $\dim(Q)$ and $\dim(Q')$. For this we base change to an algebraic closure $\bar{k}$. Then either $q \cong n\hh$ or $q \cong n\hh \perp \la 1 \ra$. In the former case we have $\Phi^{\bar{k}}(\tilde{M}(A_q)) = \Phi^{\bar{k}} (\tilde{M}(\emptyset))(n)[2n] = \zz/2(n)[2n-1]$, whereas in the latter case we have $\Phi^{\bar{k}}(\tilde{M}(A_q)) = \Phi^{\bar{k}} (\tilde{M}(\{x^2 = 1\}))(n)[2n] = \zz/2(n)[2n]$. None of these graded one-dimensional vector spaces are isomorphic, so $\dim(Q)$ can be read off from $\Phi^{\bar{k}}(\tilde{M}(A_q))$. Of course $\dim(Q)$ determines $\dim(Q')$.

Next we show that $\tilde{M}(A_q)$ determines $i_W(q)$ and $i_W(q')$. For this we start with the exact triangle
\[ M(A_q) \to M(Q') \to M(Q)(1)[2] \to M(A_q)[1], \]
from which we deduce the exact triangle
\begin{equation} \label{eq:triangle}
  \tilde{M}(A_q) \to \tilde{M}(Q') \to M(Q)(1)[2] \to \tilde{M}(A_q)[1].
\end{equation}
Write $q = n\hh \perp q_{a}$ with $q_{a}$ anisotropic. Then $q' = n\hh + q_{a}'$ (but $q_a'$ need not be anisotropic).

It follows from triangle \eqref{eq:triangle} and equation \eqref{eq:twist} that we have an exact triangle
$$
\Phi^k(\tilde{M}(A_q)) \to \Phi^k(\tilde{M}(Q_a'))(n)[2n] \to \Phi^k (M(Q_a))(n+1)[2n+2] \to \Phi^k(\tilde{M}(A_q))[1].
$$
Since $Q_a$ is anisotropic, we have $\Phi^k (M(Q_a)) = 0$, and so $\Phi^k(\tilde{M}(A_q))\cong \Phi^k(\tilde{M}(Q_a'))(n)[2n]$.
There are two cases.
\begin{itemize}
\item[I.] $q_a'$ is also anisotropic.  Then $\Phi^k (M(Q_a')) = 0$ as well, so
\[ \Phi^k(\tilde{M}(A_q)) = \zz/2(n)[2n-1]. \]
\item[II.] $q_a'$ is isotropic, so $q_a' = \hh \perp q_b$. Then $q_b$ is anisotropic
(indeed $q_a \perp \la -1 \ra = q_a' \cong q_b \perp \la 1 \ra \perp \la -1 \ra$, so $q_a \cong q_b \perp \la 1 \ra$ by Witt cancellation; and a subform of an anisotropic form is anisotropic).
By \eqref{eq:rost},
\[\Phi^k(\tilde{M}(A_q)) = \Phi^k(\tilde{M}(Q_a'))(n)[2n] = \zz/2(n + \dim(Q'_a))[2n+2\dim(Q'_a)].\]
\end{itemize}

Thus either (i) $\Phi^k(\tilde{M}(A_q)) = \zz/2(n)[2n+1]$ or (ii) $\Phi^k(\tilde{M}(A_q)) = \zz/2(m)[2m]$.
Note that (i) and (ii) are different: for no value of $n, m$ do we have $\zz/2(n)[2n+1] \cong \zz/2(m)[2m]$.
Situation (i) corresponds to case (I), and situation (ii) corresponds to case (II). In case (I) we know that $i_W(q) = n = i_W(q')$.
In case (II) we read off the number $m = i_W(q) + \dim(Q'_a)$.
But we also know the number $\dim(Q) = 2i_W(q) + \dim(Q_a) = 2i_W(q) + \dim(Q'_a) -1$, by the first part.
From this we can solve for $i_W(q)$ (namely $i_W(q) = \dim(Q) - m + 1$). Finally $i_W(q') = i_W(q) + 1$ in this case.
\Qed
\end{prof}

\noindent
{\small {\bf Acknowledgements:} \ \
We are grateful to the Referee for useful remarks which improved the exposition.}

\end{document}